\newcommand\myshade{85}
\colorlet{mylinkcolor}{violet}
\colorlet{mycitecolor}{YellowOrange}
\colorlet{myurlcolor}{Aquamarine}
\newlist{sumside}{itemize}{2}
\setlist[sumside]{topsep=0pt,label=-,leftmargin=2em,noitemsep}
\patchcmd{\section}{\scshape}{\bfseries}{}{}
\renewcommand{\@secnumfont}{\bfseries}
\patchcmd{\@settitle}{\uppercasenonmath\@title}{\large}{}{}
\patchcmd{\@setauthors}{\MakeUppercase}{}{}{}
\newtheorem{thm}{Theorem}
\newtheorem{define}[thm]{Definition}
\newtheorem{cor}[thm]{Corollary}
\newtheorem*{exam-non}{Example}
\def\th@definition{%
	\thm@headfont{\bfseries}
	\normalfont 
}
\newcommand{\ignore}[1]{}
\newcommand{\nMod}[2]{
	\not\equiv #1\,\,(\text{mod}\,\,#2)
	}
\newcommand{\Mod}[2]{
  \,\equiv\, #1\,\,(\text{mod}\,\,#2)
}
\newcommand{\NN}{\mathbb{N}}
\newcounter{idfamily}
\newcounter{familyexample}[idfamily]
\begin{document}

\title{
On a dual and an overpartition generalization of a family of identities of Andrews
}
\author{Shashank Kanade}
\address{University of Alberta, Edmonton, Canada}
\thanks{S.K. is currently supported by PIMS Post-doctoral Fellowship awarded by Pacific Institute for the Mathematical Sciences}
\email{kanade@ualberta.ca}

\author{Matthew C.\ Russell}
\address{Rutgers, The State University of New Jersey, Piscataway, USA}
\email{russell2@math.rutgers.edu}

\begin{abstract}
We present a dual of a family of partition identities of Andrews involving partitions with no repeated odd parts (among other conditions), along with an overpartition generalization that encapsulates both families. These were discovered during the course of research for an upcoming article by the authors along with Debajyoti Nandi. The proof uses Appell's comparison theorem.
\end{abstract}

\maketitle

\section{Introduction}

Recall the well-known partition theorem of I.\ Schur, (see  \cite{schur} and \cite[Ch. 7]{And-book}):
\begin{thm}[Schur]
Partitions of a positive integer $n$ where each part is $\Mod{\pm 1}{6}$ 
are equinumerous with partitions of $n$ where the difference between
adjacent parts is at least 3, and at least 6 if these parts are divisible by $3$. 
\end{thm}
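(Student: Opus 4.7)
The plan is to prove the theorem at the level of generating functions. Write $A(n)$ for the number of partitions counted by the first condition and $S(n)$ for the number of partitions counted by the second. The product side has the well-known form
$$\sum_{n \geq 0} A(n)\, q^n \;=\; \prod_{k \geq 1,\, k \equiv \pm 1\,(\bmod\, 6)} \frac{1}{1-q^k},$$
and an application of Euler's identity $1+q^k = (1-q^{2k})/(1-q^k)$ to the factors with $k \not\equiv 0 \pmod{3}$ rewrites this as $\prod_{k \not\equiv 0\,(\bmod\, 3)}(1+q^k)$. It therefore suffices to show that $S(n)$ equals the number of partitions of $n$ into distinct parts not divisible by $3$.

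For the Schur side, I would set up a system of $q$-difference equations. Stratify Schur partitions by the residue class modulo $3$ of the smallest part, producing three generating functions $F_0(q), F_1(q), F_2(q)$, and observe that removing the smallest part leaves either the empty partition or another Schur partition whose new smallest part is constrained: residue $0$ forces a gap of at least $6$, while residues $1$ or $2$ force a gap of at least $3$. Encoding this in a bivariate refinement $F(x,q)$ in which the exponent of $x$ records the smallest part, these observations translate into a linear $3 \times 3$ $q$-difference system relating $F(x,q)$ to $F(xq^3, q)$.

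Iterating the system produces, in the limit, an infinite product. The final step is to identify this product with $\prod_{k \not\equiv 0\,(\bmod\, 3)}(1+q^k)$: each round of iteration should contribute the two factors $(1+q^{3m+1})(1+q^{3m+2})$, with $m$ indexing the iterations, and these exhaust the target product. The principal obstacle is bookkeeping: the Schur gap toggles between $3$ and $6$ according to a residue class, so care is needed in writing down the correct $q$-difference system and in taking the limit. Once the system is correctly assembled, the remainder of the argument follows the classical Schur--Andrews template and reduces to routine $q$-series manipulation.
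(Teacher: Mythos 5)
The paper does not actually prove this statement: Schur's theorem is quoted as classical background, with references to Schur's original paper and to Andrews' book, and the only proof on display in the paper is that of Theorem \ref{thm:mainoverthm}. So the relevant benchmark for your sketch is Andrews' classical proof, which is indeed the ``$q$-difference equations plus Appell's comparison theorem'' argument you are gesturing at. Your product-side reduction is correct and complete: $\prod_{k\equiv\pm1\,(6)}(1-q^k)^{-1}=\prod_{k\not\equiv0\,(3)}(1+q^k)$ by Euler's device.

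The gap is in the sum-side setup, and it is not mere bookkeeping. If the exponent of $x$ records the \emph{smallest} part, then $F(xq^3,q)$ multiplies the term of a partition with smallest part $s$ by $q^{3s}$, which corresponds to no natural operation on Schur partitions; worse, the limit accessible through Appell's comparison theorem in that variable is the limit of the coefficients as the smallest part tends to infinity, which is trivial (only the empty partition survives) and says nothing about $S(n)$. The workable setup --- the one Andrews uses for Schur's theorem and the one this paper uses to prove Theorem \ref{thm:mainoverthm} --- tracks an \emph{upper bound on the largest part}: with $s_N(q)$ the generating function for Schur partitions having all parts $\le N$, one has $s_N=s_{N-1}+q^Ns_{N-3}$ if $3\nmid N$ and $s_N=s_{N-1}+q^Ns_{N-4}$ if $3\mid N$, and $\lim_{N\to\infty} s_N$ is the full Schur generating function, which Appell recovers as $\lim_{x\to1}(1-x)\sum_N s_Nx^N$. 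Finally, your assertion that each round of iteration contributes the factors $(1+q^{3m+1})(1+q^{3m+2})$ is precisely the statement to be proved, and it does not fall out of a telescoping first-order equation as it does in the proof of Theorem \ref{thm:mainoverthm}: because of the mod-$3$ case distinction, the functional equation here does not reduce to a single first-order relation in the shift $x\mapsto xq$, and combining the system and evaluating the limit is the actual content of Andrews' proof rather than routine $q$-series manipulation. As written, the proposal defers the entire substance of the argument to steps that, in the form described, would not go through.
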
	
During the course of a proof of this theorem using Appell's comparison theorem
(recalled below, Theorem \ref{thm:Appell}), roughly 50 years ago G.\ E.\ Andrews was led
to the following family of partition identities.
\begin{thm}[Thm.\ 3 \cite{And-SecondSchur}]\label{thm:And1}
For $k\geq 2$, let $B_{k-1,k}(n)$ be the number of partitions of a non-negative integer $n$ in which each part is either even but $\nMod{4k-2}{4k}$ or odd and $\Mod{2k-1,4k-1}{4k}$. Also, let $C_{k-1,k}(n)$ be the number of partitions of $n$ in which if an odd part $2j+1$ is present, then none of the other parts are equal to any of $2j+1,2j,\dots,2j-2k+3$, and the smallest part is not equal to any of $1,3,\dots,2k-3$. Then, $B_{k-1,k}(n) = C_{k-1,k}(n)$ for all $n$.
\end{thm}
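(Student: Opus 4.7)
My plan is to compute the generating functions for both sides of the claimed identity and show they coincide, with Appell's comparison theorem entering at a key step on the $C$-side. On the $B$-side, unfolding the congruence conditions and observing that the two odd residues $2k-1, 4k-1 \pmod{4k}$ together cover the single residue class $-1 \pmod{2k}$, one obtains directly
\[
\sum_{n \geq 0} B_{k-1,k}(n)\, q^n \;=\; \frac{(q^{4k-2};q^{4k})_\infty}{(q^2;q^2)_\infty\,(q^{2k-1};q^{2k})_\infty}.
\]

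For the $C$-side, I would decompose each partition counted by $C_{k-1,k}$ into its odd parts $o_1 > \cdots > o_r$ and its multiset of even parts. Since any odd part lying in $\{1, 3, \ldots, 2k-3\}$ would itself be a smallest part violating the defining condition (the forbidden window $\{2j-2k+3, \ldots, 2j+1\}$ forces every other part to be $\geq 2j+2$), the odd parts must satisfy $o_r \geq 2k-1$ and $o_i - o_{i+1} \geq 2k$. Parametrizing $o_i = 2k(r-i+1) - 1 + 2 d_i$ with $d_1 \geq d_2 \geq \cdots \geq d_r \geq 0$, and treating the even parts as an arbitrary multiset of positive even integers that avoid the window $\{o_i - 1, o_i - 3, \ldots, o_i - 2k + 3\}$ of $k-1$ forbidden values below each $o_i$, the contribution from $C$-partitions with exactly $r$ odd parts factors as $q^{kr(r+1)-r} S_r(q)/(q^2;q^2)_\infty$, where $S_r(q)$ encodes the interaction between the forbidden windows and the even-part generating function, summed over the $d_i$. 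A direct expansion gives $S_r(q) = 1/(q^{2k};q^{2k})_r$ in small cases; for general $r$ I would set up an auxiliary bivariate series $T_r(z;q)$ (with $z$ tracking the largest $d_i$) satisfying a $q$-difference equation, iterate the equation, and extract the evaluation from the $z \to 1^-$ limit via Appell's comparison theorem. Summing over $r$ then yields
\[
\sum_{n \geq 0} C_{k-1,k}(n)\, q^n \;=\; \frac{1}{(q^2;q^2)_\infty} \sum_{r \geq 0} \frac{q^{kr(r+1) - r}}{(q^{2k};q^{2k})_r}.
\]

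The theorem now reduces to the Rogers--Ramanujan-type $q$-series identity
\[
\sum_{r \geq 0} \frac{q^{kr(r+1) - r}}{(q^{2k};q^{2k})_r} \;=\; \frac{(q^{4k-2};q^{4k})_\infty}{(q^{2k-1};q^{2k})_\infty},
\]
which I would prove via the Jacobi triple product applied to base $q^{4k}$, or equivalently via a Bailey pair argument. This $q$-series identity is the main obstacle; the combinatorial reduction to it, once $S_r$ is evaluated, is largely mechanical.
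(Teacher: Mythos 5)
Your reduction has the right skeleton --- the product side does equal $(q^{4k-2};q^{4k})_\infty/\bigl((q^2;q^2)_\infty(q^{2k-1};q^{2k})_\infty\bigr) = (-q^{2k-1};q^{2k})_\infty/(q^2;q^2)_\infty$, your analysis of the odd parts ($o_r\ge 2k-1$, gaps $\ge 2k$) is correct, and the sum $\sum_{r\ge 0} q^{kr(r+1)-r}/(q^{2k};q^{2k})_r$ does evaluate to $(-q^{2k-1};q^{2k})_\infty$ --- but there is a genuine gap exactly where all of the difficulty of the theorem is concentrated: the claim $S_r(q)=1/(q^{2k};q^{2k})_r$. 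This is the assertion that the $d_i$ and the even parts, which interact through $r$ moving windows of $k-1$ forbidden even values apiece (each window's location depending on its own $d_i$), jointly contribute $1/\bigl((q^{2k};q^{2k})_r(q^2;q^2)_\infty\bigr)$. You verify it only in small cases and propose a bivariate series $T_r(z;q)$ with a single variable tracking the largest $d_i$; it is not clear that one catalytic variable suffices to close a $q$-difference system, since the $r$ windows move independently and the even parts are unbounded. Until this lemma is actually proved you do not have a proof, and proving it is essentially equivalent in difficulty to the theorem itself. A secondary misjudgment: your ``main obstacle'' at the end is not an obstacle; with $Q=q^{2k}$ and $x=q^{2k-1}$ the exponent $kr(r+1)-r$ equals $2k\binom{r}{2}+(2k-1)r$, so the identity is Euler's $\sum_r Q^{\binom{r}{2}}x^r/(Q;Q)_r=(-x;Q)_\infty$, requiring neither the Jacobi triple product (which concerns bilateral theta sums) nor Bailey pairs.

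For contrast, the paper sidesteps the window/even-part interaction entirely: it proves the overpartition generalization (Theorem \ref{thm:mainoverthm}) by a recursion on the largest allowed part, $R_j = (1-q^j)^{-1}R_{j-1} + aq^{j-k+1}(1-q^j)^{-1}R_{j-k}$, which packages into the single $q$-difference equation $(1-x)F(a,x,q)=(1+ax^kq)F(a,xq,q)$; iterating and applying Appell's comparison theorem once yields the product, and Theorem \ref{thm:And1} then drops out under the substitution $(a,q)\mapsto(q^{2k-3},q^2)$. If you wish to salvage your decomposition, the cleanest repair is likely to prove your $S_r$ evaluation by an analogous recursion on the largest part rather than on the number $r$ of odd parts.
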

The meaning of the subscripts $\{k-1,k\}$ (in the previous theorem) and $\{0,k\}$ (in the following theorem) will be made clear later in this paper.
Incidentally, this same family also arose during the course of our recent search  (jointly with D.\ Nandi) for identities of Rogers-Ramanujan-MacMahon type \cite{KNR}. In this note, we provide a dual version of this family:
\begin{thm}\label{thm:dual}
For $k\geq 2$, let $B_{0,k}(n)$ be the number of partitions of a non-negative integer $n$ in which each part is either even but $\nMod{2}{4k}$ or odd and $\Mod{1,2k+1}{4k}$. Also, let $C_{0,k}(n)$ be the number of partitions of $n$ in which if an odd part $2j+1$ is present, then none of the other parts are equal to any of $2j+1,2j+2,\dots,2j+2k-1$. Then, $B_{0,k}(n) = C_{0,k}(n)$ for all $n$.
\end{thm}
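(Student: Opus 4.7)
The plan is to follow the blueprint of Andrews' original argument for Theorem \ref{thm:And1}. The first step is to write the $B$-side as an infinite product. Listing the allowed residues modulo $4k$ and using the factorization $(q;q^{2k})_\infty = (q;q^{4k})_\infty(q^{2k+1};q^{4k})_\infty$, one obtains
\[
\sum_{n\ge 0} B_{0,k}(n)\, q^n \;=\; \frac{(q^2;q^{4k})_\infty}{(q^2;q^2)_\infty\,(q;q^{2k})_\infty}.
\]
The goal becomes to derive the same product for the generating function of $C_{0,k}(n)$.

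For the $C$-side, let $F_l(q)$ be the generating function for $C_{0,k}$-partitions all of whose parts are at least $2l+1$, so that $F_0(q) = \sum_n C_{0,k}(n) q^n$ and $F_l(q)\to 1$ as $l\to\infty$. Conditioning on whether $2l+1$ appears as a part (at most once; if so, the next allowed part is $\ge 2l+2k$) gives
\[
F_l(q) \;=\; \frac{F_{l+1}(q)}{1-q^{2l+2}} \;+\; \frac{q^{2l+1}\, F_{l+k}(q)}{1-q^{2l+2k}}.
\]
The substitution $\tilde F_l := (q^{2l+2};q^2)_\infty\,F_l(q)$ removes the singular factor at $l=0$ and simplifies the recursion to the telescoping identity
\[
\tilde F_l - \tilde F_{l+1} \;=\; q^{2l+1}(q^{2l+2};q^2)_{k-1}\,\tilde F_{l+k},
\]
with $\tilde F_l\to 1$ as $l\to\infty$ and $\tilde F_0(q) = (q^2;q^2)_\infty\sum_n C_{0,k}(n)q^n$ the quantity of interest.

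Following Andrews, I would next introduce an auxiliary variable $x$ and form a bivariate generating function $f(x;q)$ encoding the above system, for instance by refining $\tilde F_l$ with an $x$-weight on each odd part so that the recursion becomes a $q$-difference equation of the form $f(x;q) = f(xq;q) + (\text{shift term})$. Iterating this equation produces a closed-form theta-like series in $x$ and $q$. Appell's comparison theorem in the form
\[
\lim_{x\to 1^-}(1-x)\sum_{N\ge 0} a_N\, x^N \;=\; \lim_{N\to\infty} a_N
\]
is then applied to a suitable specialization or partial-sum sequence derived from $f$ to extract $\tilde F_0(q)$ as an explicit $q$-series.

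The main obstacle is the final matching step: identifying the expression obtained from the Appell limit with the $B$-side product $(q^2;q^{4k})_\infty / (q;q^{2k})_\infty$. This should come down to one or two applications of the Jacobi triple product
\[
\sum_{n\in\mathbb{Z}} (-a)^n q^{\binom{n}{2}} \;=\; (a;q)_\infty\,(q/a;q)_\infty\,(q;q)_\infty,
\]
specialized with $q \mapsto q^{4k}$ and $a \in \{q,\, q^{2k+1}\}$, transforming the resulting theta sum into the target ratio of infinite products. Once this identification is made, dividing by $(q^2;q^2)_\infty$ yields $\sum_n C_{0,k}(n)\, q^n = \sum_n B_{0,k}(n)\, q^n$, completing the proof.
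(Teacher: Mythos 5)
Your setup is sound as far as it goes: the product form of the $B$-side, the smallest-part recursion
\[
F_l(q) \;=\; \frac{F_{l+1}(q)}{1-q^{2l+2}} \;+\; \frac{q^{2l+1}\, F_{l+k}(q)}{1-q^{2l+2k}},
\]
and the normalization $\tilde F_l=(q^{2l+2};q^2)_\infty F_l$ all check out. The gap is in how you propose to finish. Appell's theorem recovers $\lim_{N\to\infty}a_N$ from $\lim_{x\to 1}(1-x)\sum_N a_Nx^N$; but with your indexing the sequence $\tilde F_l$ tends to $1$, so the Appell limit applied to $\sum_l\tilde F_lx^l$ yields only the useless value $1$, while the quantity you actually need, $\tilde F_0$, sits at the \emph{other} end of the sequence and is invisible to that limit. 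Nor does an ``$x$-weight on each odd part'' produce the advertised equation $f(x;q)=f(xq;q)+(\text{shift})$: the $l$-dependent factor $(q^{2l+2};q^2)_{k-1}$ expands into $2^{k-1}$ monomials in $q^{2l}$, so the generating function of your recursion satisfies a relation involving $k$ distinct dilations $G(xq^{2c})$, not a first-order $q$-difference equation that iterates to a closed form. The steps you defer --- producing the theta-like series, and the Jacobi-triple-product matching you explicitly flag as ``the main obstacle'' --- are precisely the content of the proof, and they are not carried out.

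The paper sidesteps all of this by truncating from the top rather than the bottom, and by proving the overpartition Theorem \ref{thm:mainoverthm} first (Theorem \ref{thm:dual} is then the specialization $(a,q)\mapsto(q^{-1},q^2)$, i.e.\ nonoverlined parts $j\mapsto 2j$ and overlined parts $\overline{j}\mapsto 2j-1$). With $R_j(a,q)$ the generating function for admissible overpartitions whose parts are all $\leq j$ and whose overlined parts are all $\leq j-k+1$, one has $R_j\to R_\infty$, the desired series, and $F(a,x,q)=\sum_j R_j(a,q)x^j$ satisfies the first-order equation $(1-x)F(a,x,q)=(1+ax^kq)F(a,xq,q)$, which iterates directly to an explicit infinite product --- no theta series and no triple product identity are needed. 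Corollary \ref{cor:formalAppell} then gives $R_\infty=\lim_{x\to 1}(1-x)F(a,x,q)$ in one line. If you wish to salvage your route, re-index by largest part so that the limit of your sequence of truncated generating functions is the series you are trying to compute; as written, the argument cannot close.
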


As an example of this family, we note that setting $k=2$ results in $B_{0,2}(n)$ counting partitions with all parts congruent to $\Mod{0,1,4,5,6}{8}$, while $C_{0,2}(n)$ counts partitions where the occurrence of an odd part forbids the occurrence of a part  $0, 1$ or $2$ greater. More specifically, for $n=10$, the $B_{0,2}(10)=10$ partitions counted by the ``product side'' are $9+1$, $8+1+1$, $6+4$, $6+1+1+1+1$, $5+5$, $5+4+1$, $5+1+1+1+1+1$, $4+4+1+1$, $4+1+1+1+1+1+1$, and $1+1+1+1+1+1+1+1+1+1$, while the $C_{0,1}(10)=10$ partitions counted by the ``sum side'' are $10$, $9+1$, $8+2$, $7+3$, $6+4$, $6+2+2$, $5+4+1$, $4+4+2$, $4+2+2+2$, and $2+2+2+2+2$. 

Over the past decade or so, there has been a lot of interest in exploring overpartition analogues of classical partition identities, 
(as a small and by no means exhaustive sample, see papers by Chen et al., Corteel, Lovejoy, and Dousse \cite{CoLo-over,Lo-RRover,CSS-brover,Dousse-Schurover}).
Overpartitions are partitions in which last occurrence of any part may appear overlined. 

The fact that odd parts are not allowed to be repeated (though even parts may be repeated arbitrarily many times) in the above theorems suggests that both are actually special cases of an overpartition theorem. We now present an overpartition generalization that can be used to recover Theorems \ref{thm:And1} and \ref{thm:dual} upon appropriate specializations.

\begin{thm}\label{thm:mainoverthm}
For $k \ge 2$,  let $D_k(m,n)$ be the number of overpartitions of $n$ with exactly $m$ overlined parts, subject to the following conditions:
\begin{itemize}
    \item If an overlined part $\overline{b}$ appears, then 
	all of the non-overlined parts $b, b+1,\dots, b+k-2$ are forbidden to appear.
	\item If an overlined part $\overline{b}$ appears, then 
	all of the overlined parts $\overline{b+1},\overline{b+2},\dots, \overline{b+k-1}$ are forbidden to appear.
\end{itemize}
Then, 
\[\sum_{m,n \ge 0} D_k(m,n)a^m q^n = \dfrac{\left(-aq;q^k\right)_{\infty}}{(q;q)_{\infty}}.\]
\end{thm}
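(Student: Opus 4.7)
My plan is to fix $m$ and evaluate $F_m(q) := \sum_n D_k(m,n)\,q^n$ in closed form, then sum over $m$ using the $q$-binomial theorem. Each overpartition contributing to $D_k(m, \cdot)$ is determined by its $m$ overlined parts $\overline{b_1} < \cdots < \overline{b_m}$ (the second forbiddance rule forces $b_{i+1} - b_i \ge k$, so the intervals $[b_i, b_i+k-2]$ are pairwise disjoint) together with an arbitrary multiset of non-overlined parts avoiding $F := \bigcup_i [b_i, b_i+k-2]$. Removing the finitely many excluded factors from $1/(q;q)_\infty$ shows the non-overlined contribution is $\prod_i (q^{b_i};q)_{k-1}/(q;q)_\infty$, giving
\[
F_m(q)\,(q;q)_\infty \;=\; \sum_{\substack{1 \le b_1 \\ b_{i+1}-b_i \ge k}} q^{b_1+\cdots+b_m}\prod_{i=1}^{m}(q^{b_i};q)_{k-1}.
\]

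The heart of the proof is the telescoping identity
\[
\sum_{b=1}^{N} q^{b}\,(q^b;q)_{j} \;=\; \frac{q\,(q^N;q)_{j+1}}{1-q^{j+1}} \qquad (N \ge 1,\ j \ge 0),
\]
proved by an easy induction on $N$, which in the limit $N \to \infty$ becomes $\sum_{b \ge 1} q^b(q^b;q)_j = q/(1-q^{j+1})$. I would apply this identity iteratively to the multi-sum: summing out $b_1 \in [1, b_2-k]$ with $j = k-1$ produces the factor $q(q^{b_2-k};q)_k/(1-q^k)$, and then the elementary merger $(q^{b-k};q)_k(q^b;q)_{k-1} = (q^{b-k};q)_{2k-1}$ combined with the substitution $c := b_2-k$ collapses the sum back into the same shape, but with one fewer variable, an extra denominator factor $1-q^k$, and an enlarged Pochhammer on what is now the smallest summation variable. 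Iterating, the $s$-th step introduces a denominator $1-q^{sk}$ and shifts the $q$-exponent by $1+sk$; the final variable is summed over all of $[1,\infty)$ using the $N=\infty$ case of the key identity. Collecting everything yields
\[
F_m(q) \;=\; \frac{q^{m+k\binom{m}{2}}}{(q^k;q^k)_m\,(q;q)_\infty}.
\]

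Finally, summing over $m$ and invoking Euler's identity $(-z;q^k)_\infty = \sum_m z^m q^{k\binom{m}{2}}/(q^k;q^k)_m$ with $z = aq$ gives the claimed product. The main obstacle I expect is the bookkeeping in the iterative step: keeping track of how the shifts, Pochhammer mergers, and accumulated $q$-powers combine so that the exponent $m+k\binom{m}{2}$ and denominator $(q^k;q^k)_m$ emerge cleanly at the end. Each individual substitution is routine, but writing the iteration transparently — for instance, by stating the intermediate expression after $s$ steps and inducting on $s$ — takes some care.
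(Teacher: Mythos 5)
Your proposal is correct, and it takes a genuinely different route from the paper. The paper never fixes the number of overlined parts: it bounds the largest part by $j$, derives the recursion $R_j = \frac{1}{1-q^j}R_{j-1} + \frac{aq^{j-k+1}}{1-q^j}R_{j-k}$, converts it into the $q$-difference equation $(1-x)F(a,x,q)=(1+ax^kq)F(a,xq,q)$ for $F(a,x,q)=\sum_j R_j(a,q)x^j$, iterates to get an infinite product, and then extracts $R_\infty$ via (a formal-power-series version of) Appell's comparison theorem. Your argument instead refines by the number $m$ of overlined parts, writes $F_m(q)(q;q)_\infty$ as a multi-sum over $b_1<\cdots<b_m$ with gaps $\ge k$, and evaluates it by iterated telescoping; I checked your key identity $\sum_{b=1}^N q^b(q^b;q)_j = q(q^N;q)_{j+1}/(1-q^{j+1})$ (it does follow by induction on $N$), the Pochhammer merger, and the small cases $m=1,2$, and the exponent $m+k\binom{m}{2}$ and denominator $(q^k;q^k)_m$ do come out as you claim, after which Euler's identity finishes the job. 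What your approach buys is an entirely elementary, Appell-free proof together with the finer closed form $F_m(q)=q^{m+k\binom{m}{2}}/\bigl((q^k;q^k)_m(q;q)_\infty\bigr)$ for the generating function refined by the number of overlined parts, which is more information than the theorem asserts; what the paper's approach buys is brevity and a uniform functional-equation template that adapts easily to related families. Two points to make explicit when you write the induction on $s$: after eliminating $b_1,\dots,b_{s-1}$ the surviving smallest variable carries the Pochhammer $(q^{c};q)_{sk-1}$ \emph{and} its gap to the next variable has grown to $sk$ (not $k$), which is precisely what makes your lemma applicable with $j=sk-1$ and upper limit $N=b_{s+1}-sk$; and the decomposition into overlined-part set times unrestricted multiset does account for a part occurring both overlined and non-overlined, since for $k\ge 2$ the first forbiddance rule excludes the non-overlined copy of $b$ itself.
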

Here and throughout, we have used the standard $q$-Pochhammer notation 
$\left(b;q\right)_t =  \prod\limits_{0\leq m < t}(1-bq^m)$. Specializing $a=1$ means that the product side is simply overpartitions where only parts $\Mod{1}{k}$ can be overlined.

We prove this family again using Appell's comparison theorem.
Appell's theorem is an important tool in the theory of $q$-series and
has been successfully exploited in many cases, see for example,
\cite{Dousse-Schurover,Dousse-And,Dousse-And2}. After we prove this theorem, we will derive corollaries for all $i \in \left\{0,\dots,k-1\right\}$, where the cases corresponding to $i=0$ and $i=k-1$ are Theorem \ref{thm:dual} and Theorem \ref{thm:And1}, respectively.

\section{Proofs}

We begin by recalling Appell's comparison theorem.

\begin{thm}[Appell's comparison theorem {\protect{\cite[page 101]{D-taylor}}}] \label{thm:Appell}
	Let $p_n$ be a sequence of positive real numbers such that
	the series $\sum_{n\geq 0}{p_n}$ diverges and such that
	the radius of convergence of $\sum_{n\geq 0}p_nz^n$ is $1$.
	Let $a_n$ be another sequence such that $\lim_{n\rightarrow\infty}{a_n/p_n}=s$.
	Then, the radius of convergence of $\sum_{n\geq 0}a_nz^n$ is also 
	$1$ and moreover,
	$\lim_{r\rightarrow 1}(\sum_{n\geq 0}a_nr^n)/(\sum_{n\geq 0}p_nr^n)=s$.
\end{thm}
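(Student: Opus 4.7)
The plan is to prove Appell's comparison theorem by elementary real analysis: reduce the ratio limit to showing that an ``error'' series is negligible compared to the divergent series $\sum_{n\geq 0} p_n r^n$, and exploit that $\sum_{n \ge 0} p_n r^n \to +\infty$ as $r \to 1^-$ to absorb bounded contributions from any fixed initial segment.

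First I would set $b_n := a_n - s p_n$, so that the hypothesis $a_n/p_n \to s$ becomes $b_n/p_n \to 0$. Writing
\[
\frac{\sum_{n \ge 0} a_n r^n}{\sum_{n \ge 0} p_n r^n} = s + \frac{\sum_{n \ge 0} b_n r^n}{\sum_{n \ge 0} p_n r^n},
\]
it suffices to show the second summand tends to $0$ as $r \to 1^-$. As a preliminary I would verify that $\sum_{n \ge 0} p_n r^n \to +\infty$: since all $p_n > 0$ and $\sum p_n = +\infty$, given any $M$ one picks $N$ with $\sum_{n=0}^{N} p_n > 2M$ and then $r$ close enough to $1$ to force $\sum_{n=0}^{N} p_n r^n > M$, which establishes divergence (equivalently, this is Abel's theorem for nonnegative divergent series).

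Next, given $\epsilon > 0$, I would choose $N$ so that $|b_n| \le \epsilon p_n$ for all $n \ge N$ and split
\[
\left| \sum_{n \ge 0} b_n r^n \right| \le \sum_{n < N} |b_n| r^n + \epsilon \sum_{n \ge N} p_n r^n \le C + \epsilon \sum_{n \ge 0} p_n r^n,
\]
where $C := \sum_{n < N} |b_n|$ is a constant independent of $r \in (0,1)$. Dividing through by $\sum_{n \ge 0} p_n r^n$ and letting $r \to 1^-$, the $C$-term vanishes by the preliminary step, yielding
\[
\limsup_{r \to 1^-} \left| \frac{\sum_{n\geq 0} b_n r^n}{\sum_{n\geq 0} p_n r^n} \right| \le \epsilon,
\]
and since $\epsilon$ was arbitrary, the limit is $0$, which is exactly the content of the moreover-clause.

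The radius-of-convergence statement falls out along the way: $a_n/p_n \to s$ forces $|a_n| = O(p_n)$, so $\sum a_n z^n$ converges for $|z| < 1$ by direct comparison with $\sum p_n z^n$, and (when $s \ne 0$) $\limsup |a_n|^{1/n} = \limsup p_n^{1/n} = 1$ pins the radius at exactly $1$. The main obstacle I anticipate is not conceptual but bookkeeping --- making sure the split at $N$ is done cleanly and that the two limits ($N \to \infty$ via $\epsilon \to 0$, and $r \to 1^-$) are taken in the correct order so that the finite head genuinely becomes negligible without circularity.
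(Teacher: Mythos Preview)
The paper does not actually prove Theorem~\ref{thm:Appell}; it merely quotes the result from Dienes' book and then applies (a formal-series corollary of) it. So there is no ``paper's own proof'' to compare against.

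That said, your argument is correct and is essentially the classical proof: reduce to $s=0$ by subtracting $s p_n$, split the error series at an index $N$ beyond which $|b_n|\le\epsilon p_n$, and kill the finite head by dividing through by the divergent $\sum p_n r^n$. One small remark on the radius-of-convergence clause: your parenthetical ``when $s\ne 0$'' is exactly right --- if $s=0$ the radius of $\sum a_n z^n$ can exceed~$1$ (take $a_n\equiv 0$), so the theorem as stated in the paper is slightly imprecise on that point, though this is irrelevant to the application.
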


Let us provide a special case ($p_n\equiv 1$) of Appell's theorem that will prove to be more amenable to our calculations.
First, let us define the notion of limit of a sequence of formal series that we shall employ.

\begin{define}\label{defn:limit}
	Let \[f_n(q)=\sum_{j\geq 0}a^{(n)}_jq^j,\quad n\in\NN\] be a sequence of elements of $R[[q]]$
	where $R$ is any ring.
	We say that $\lim\limits_{n\rightarrow\infty}f_n(q)$ exists if
	for all $M\in \NN$ there exists an $N\in\NN$ with
	$q^{M+1}\big\vert \left(f_{n_1}(q) - f_{n_2}(q)\right)$ for all $n_1,n_2\geq N$; in other words, the coefficient of any 
	arbitrary power of $q$ in $f_n(q)$ stabilizes as $n\rightarrow \infty$.
	(In practice, this condition is usually very easy to verify.)
	If the limit exists, we let it be
	\begin{align*}
	\lim\limits_{n\rightarrow\infty}f_n(q)&= \sum_{j\geq 0} \left(\lim\limits_{t\rightarrow\infty}a^{(t)}_j\right)q^j.
	\end{align*}
\end{define}

\begin{cor}\label{cor:formalAppell}
	Let \[f_n(q)=\sum_{j\geq 0}a^{(n)}_jq^j,\quad n\in \NN\] be a sequence of elements of $R[[q]]$
	where $R$ is some ring
	such that $f_{\infty}(q):=\lim\limits_{n\rightarrow\infty}f_n(q)$ exists.
	Let 
	\begin{align*}
	F(x,q) &= \sum_{n\geq 0}f_n(q)x^n.
	\end{align*}
	Then,
	\[\lim\limits_{x\rightarrow 1}\left((1-x)F(x,q)\right)=f_\infty(q).\]
\end{cor}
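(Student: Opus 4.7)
The plan is to reduce the statement to a coefficient-wise computation in $q$, using Abel summation to turn an apparent analytic limit into an algebraic substitution.

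First, I would perform the standard telescoping rearrangement
\[
(1-x)F(x,q) \;=\; \sum_{n\geq 0}f_n(q)x^n - \sum_{n\geq 0}f_n(q)x^{n+1} \;=\; f_0(q) + \sum_{n\geq 1}\bigl(f_n(q)-f_{n-1}(q)\bigr)x^n.
\]
Extracting the coefficient of $q^j$ on both sides then produces the formal series in $x$ alone:
\[
[q^j]\,(1-x)F(x,q) \;=\; a^{(0)}_j + \sum_{n\geq 1}\bigl(a^{(n)}_j - a^{(n-1)}_j\bigr)x^n.
\]

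Second, I would invoke the stabilization hypothesis encoded in Definition \ref{defn:limit}. Applying the defining condition with $M=j$, there exists an $N$ (depending on $j$) such that $a^{(n)}_j = a^{(N)}_j$ for every $n\geq N$. Consequently the increments $a^{(n)}_j - a^{(n-1)}_j$ vanish for all $n>N$, so the infinite series displayed above is in fact a polynomial in $x$. This is the decisive observation: the analytic-looking $x\to 1$ limit collapses to a polynomial evaluation.

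Third, since each coefficient of $q^j$ in $(1-x)F(x,q)$ is a polynomial in $x$, the limit $x\to 1$ is simply substitution, and the telescoping sum collapses to
\[
a^{(0)}_j + \bigl(a^{(N)}_j - a^{(0)}_j\bigr) \;=\; a^{(N)}_j \;=\; \lim_{n\to\infty} a^{(n)}_j,
\]
which by the definition of $f_\infty(q)$ is exactly $[q^j]f_\infty(q)$. Since this holds for every $j$, the identity $\lim_{x\to 1}(1-x)F(x,q) = f_\infty(q)$ follows coefficient-by-coefficient in $q$.

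There is essentially no obstacle. The only mild point of care is to interpret the limit $\lim_{x\to 1}$ in the conclusion consistently with Definition \ref{defn:limit}, namely coefficient-wise in $q$; once that interpretation is fixed, the stabilization hypothesis trivializes the $x$-limit because each $q$-coefficient becomes a polynomial rather than a genuine power series in $x$.
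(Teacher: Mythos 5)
Your proof is correct. Note that the paper itself supplies no argument for this corollary: it is introduced as ``a special case ($p_n\equiv 1$) of Appell's theorem,'' i.e., as an instance of the analytic comparison theorem (Theorem \ref{thm:Appell}) applied coefficient-by-coefficient in $q$. Your telescoping argument is a genuinely different, purely formal route, and arguably the more appropriate one: the corollary is stated for series over an arbitrary ring $R$, where the hypotheses of Appell's theorem (positive real terms, radius of convergence) are not even meaningful, whereas your key observation --- that the stabilization of $a^{(n)}_j$ forces $[q^j]\bigl((1-x)F(x,q)\bigr)=a^{(0)}_j+\sum_{n\geq 1}\bigl(a^{(n)}_j-a^{(n-1)}_j\bigr)x^n$ to be a polynomial in $x$, so that $\lim_{x\to 1}$ is literal substitution and the sum telescopes to $\lim_{n\to\infty}a^{(n)}_j$ --- requires no analytic input at all. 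The caveat you flag, that $\lim_{x\to 1}$ in the conclusion must itself be read coefficient-wise in $q$, is exactly the interpretation the paper relies on later when it cancels $(1-x)$ against the $j=0$ factor of the infinite product and sets $x=1$. In short: the analytic theorem buys the historical connection to Appell and Schur, while your argument buys a self-contained proof valid in the formal-power-series setting actually used.
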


We now prove Theorem \ref{thm:mainoverthm}. 

\begin{proof}[Proof of Theorem \ref{thm:mainoverthm}]
Fix $k \ge 2$. Let $p_j(m, n)$ be the number of overpartitions of $n$ 
with $m$ overlined parts
that satisfy the conditions in Theorem \ref{thm:mainoverthm}, with the further restriction that all parts are $\leq j$. Let $r_j(m,n)$ be the number of overpartitions of $n$ counted by $p_j(m,n)$ where $\overline{j}$, $\overline{j-1}$, \dots, $\overline{j-k+2}$ do not appear (that is, the largest possible overlined part is $\overline{j-k+1}$). Then, let 
\begin{align*}
P_j(a,q) &= \sum_{m,n \ge 0} p_j(m,n)a^mq^n,\\
R_j(a,q) &= \sum_{m,n \ge 0} r_j(m,n)a^mq^n,
\end{align*}
we let $P_0=R_0=1$.
It is clear that
\[R_{\infty}(a,q) = P_{\infty}(a,q) = \sum_{m,n \ge 0} D_k(m,n)a^m q^n.\]

Let 
\[F(a,x,q) = \sum_{j\geq 0}R_j(a,q)x^j.\]
Observe that the following recursion and initial conditions are satisfied:
\begin{align*}
R_j(a,q) &= \dfrac{1}{1-q^j}R_{j-1}(a,q) + \dfrac{aq^{j-k+1}}{1-q^j} R_{j-k}(a,q),\,\, j\geq k.\\
R_j(a,q) &= \dfrac{1}{(q;q)_j},\,\, 0\leq j < k.
\end{align*}
Note the following alternate way to write the recursion and the initial conditions:
\begin{align*}
R_j(a,q) &= \dfrac{1}{1-q^j}R_{j-1}(a,q) + \dfrac{aq^{j-k+1}}{1-q^j} R_{j-k}(a,q),\,\, j\geq 1.\\
R_0(a,q) &= 1,\quad R_{j}(a,q)=0\,\,\text{for } -k<j<0,
\end{align*}
which immediately gets us to
\[
(1-x)F(a,x,q)=F(a,xq,q)+ax^kqF(a,xq,q) = (1+ax^kq)F(a,xq,q).
\]
Noting that 
\[
\lim\limits_{n\rightarrow\infty}F(a,xq^n,q)=R_0(a,q)=1,
\]
we obtain
\[F(a,x,q)= \prod\limits_{j\geq 0}\dfrac{1+ax^kq^{jk+1}}{1-xq^j}.\]
Finally, by Corollary \ref{cor:formalAppell},
\[
R_{\infty}(a,q)=
\lim\limits_{x\rightarrow 1}((1-x)F(a,x,q))
= \lim\limits_{x\rightarrow 1}
\left(\prod\limits_{j\geq 0}\dfrac{1+ax^kq^{jk+1}}{1-xq^{j+1}}\right)
=
\dfrac{\left(-aq;q^k\right)_{\infty}}{(q;q)_{\infty}}.
\]
\end{proof}

Now, Theorems \ref{thm:And1} and \ref{thm:dual} can be recovered by appropriate specializations. Letting $(a,q)\mapsto \left(q^{-1},q^2\right)$ (that is, we map every nonoverlined part $j\mapsto 2j$ and every overlined part $\overline{j}\mapsto 2j-1$) gives us Theorem \ref{thm:dual}, while using $(a,q)\mapsto \left(q^{2k-3},q^2\right)$ (now mapping $j\mapsto 2j$ and every overlined part $\overline{j}\mapsto 2j+2k-3$) provides us with Theorem \ref{thm:And1}.

However, many more corollaries can be found. For $k\ge 2$, by choosing $i \in \left\{0,\dots,k-1\right\}$ and letting $(a,q)\mapsto \left(q^{2i-1},q^2\right)$, we obtain:
\begin{cor}
	Let $B_{i,k}(n)$ be the number of partitions of a non-negative integer $n$ in which each part is either even but $\nMod{4i+2}{4k}$ or odd and $\Mod{2i+1,2k+2i+1}{4k}$. Also, let $C_{i,k}(n)$ be the number of partitions of $n$ in which if an odd part $2j+1$ is present, 
	then none of the other even parts are equal to any of  
	$2j-2i+2, 2j-2i+4, \dots, 2j+2k-2i-2$,
	none of the other odd parts are equal to any of
	$2j+1, 2j+3, \dots, 2j+2k-1$
	and the smallest odd part is at least $2i+1$. 
	Then, $B_{i,k}(n) = C_{i,k}(n)$ for all $n$.
\end{cor}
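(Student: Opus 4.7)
The plan is to deduce the corollary by specializing Theorem~\ref{thm:mainoverthm} at $(a,q)\mapsto(q^{2i-1},q^2)$ and then decoding each side combinatorially. This is the same mechanism the excerpt already applied at the two extremes $i=0$ (giving Theorem~\ref{thm:dual}) and $i=k-1$ (giving Theorem~\ref{thm:And1}), so all that remains is to check that the intermediate values of $i$ produce precisely $B_{i,k}=C_{i,k}$.

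For the product side, the substitution turns $\dfrac{(-aq;q^k)_\infty}{(q;q)_\infty}$ into $\dfrac{(-q^{2i+1};q^{2k})_\infty}{(q^2;q^2)_\infty}$. I would then apply the identity $(-x;q)_\infty = \dfrac{(x^2;q^2)_\infty}{(x;q)_\infty}$ to the numerator to rewrite this as
\[
\frac{(q^{4i+2};q^{4k})_\infty}{(q^{2i+1};q^{2k})_\infty\,(q^2;q^2)_\infty}.
\]
The two factors in the denominator generate, respectively, odd parts $\equiv 2i+1\pmod{2k}$ (which mod $4k$ are exactly the residues $2i+1$ and $2k+2i+1$) and all even parts, each with repetition, while the numerator cancels precisely the even parts $\equiv 4i+2\pmod{4k}$. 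This is exactly the enumeration described by $B_{i,k}(n)$.

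For the sum side, the weight-doubling $q\mapsto q^2$ sends each non-overlined part $j$ to an even part $2j$, while the extra factor $a=q^{2i-1}$ sends each overlined part $\overline{j}$ to an odd part $2j+2i-1$. I would then translate the two forbidden-pattern rules of Theorem~\ref{thm:mainoverthm}. Writing the generic odd part as $2j+1=2b+2i-1$ (so $b=j-i+1$), the forbidden non-overlined block $b,b+1,\dots,b+k-2$ maps to the even block $2j-2i+2,\,2j-2i+4,\dots,\,2j+2k-2i-2$, and the forbidden overlined block $\overline{b+1},\dots,\overline{b+k-1}$ maps to the odd block $2j+3,\,2j+5,\dots,\,2j+2k-1$. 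Because overlined parts are automatically distinct, the odd part $2j+1$ itself cannot repeat, which adjoins the entry $2j+1$ to that odd block and yields precisely the list $2j+1,2j+3,\dots,2j+2k-1$ in the statement. Finally, the minimal choice $b=1$ gives minimal odd part $2i+1$, which is the ``smallest odd part at least $2i+1$'' condition.

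I do not expect any real obstacle beyond careful arithmetic bookkeeping; the one thing to check is that the range $i\in\{0,\dots,k-1\}$ is exactly what is needed to keep $2i+1$ as a legitimate odd residue inside one period modulo $4k$ (so that the forbidden blocks on the sum side are nontrivial and the product-side residues $2i+1,\,4i+2,\,2k+2i+1$ remain distinct).
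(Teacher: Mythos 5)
Your proposal is correct and is exactly the paper's route: the authors obtain the corollary by the same specialization $(a,q)\mapsto(q^{2i-1},q^2)$ of Theorem \ref{thm:mainoverthm}, stating only the part-mapping $j\mapsto 2j$, $\overline{j}\mapsto 2j+2i-1$ and leaving the bookkeeping implicit. Your write-up simply fills in the details (the Euler-type factorization $(-q^{2i+1};q^{2k})_\infty=(q^{4i+2};q^{4k})_\infty/(q^{2i+1};q^{2k})_\infty$ on the product side and the translation of the two forbidden blocks on the sum side), all of which checks out.
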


We leave it to the reader to work out identities related to the specializations
$q\mapsto q^t$ for $t>2$.

\end{document}